\newtheorem{theorem}{Theorem}[section]
\newtheorem{lemma}[theorem]{Lemma}
\theoremstyle{definition}
\newtheorem{algorithm}[theorem]{Algorithm}
\newtheorem{example}[theorem]{Example}
\theoremstyle{remark}
\newcommand{\R}{{\mathbb R}}
\newcommand{\I}{{\mathbb I}}
\newcommand{\cI}{\mathcal I}
\newcommand{\cJ}{\mathcal J}
\newcommand{\cK}{\mathcal K}
\newcommand{\cC}{\mathcal C}
\begin{document}

\title[Sparse Null Vector Problem]{Random Search Algorithms for the Sparse Null Vector Problem}

\author{Hans Engler}
\address{Dept. of Mathematics \\
Georgetown University\\
Box 571233\\
Washington, DC 20057\\
USA} \email{engler@georgetown.edu}

\begin{abstract}We consider the following problem: Given a matrix $A$, find minimal subsets of columns of $A$ with cardinality no larger than a given bound that are linear dependent or nearly so. This problem arises in various forms in optimization, electrical engineering, and statistics. In its full generality, the problem is known to be NP-complete. We present a \textsc{Monte Carlo} method that finds such subsets with high confidence. We also give a deterministic method that is capable of proving that no subsets of linearly dependent columns up to a certain cardinality exist. The performance of both methods is analyzed and illustrated with numerical experiments.
\end{abstract}

\maketitle

\section{Introduction}

Let $A$ be a real $M \times N$ matrix with rank $m < N$. In this paper, the following problem is studied:

\smallskip
\emph{Find a minimal set of linearly dependent columns of $A$,}

\smallskip
that is, a set of linearly dependent columns of $A$ such that each proper subset consists of linearly independent columns. An essentially equivalent problem is:

\smallskip
\emph{Find a null vector $x$ of $A$ whose support $\{i \, | \, x_i \ne 0\}$ is minimal.}

\smallskip
This problem is commonly called the \emph{sparse null vector problem}, and an extension of the problem, known as the \emph{sparse null space problem}, is:

\smallskip
\emph{Find a basis of the null space of $A$ whose vectors all have minimal support}.

\smallskip
The sparse null space problem occurs in optimization and in finite element analysis, where it is often of interest to express all solutions of an underdetermined system of equations $Ax = b$ (a system of constraint equations or a discrete balance law) in the form $x = x_0 + Cu$, with a matrix $C$ whose columns span the null space of $A$. Sparsity of $C$ leads to well-known computational advantages. The sparse null space problem is discussed in detail in \cite{coleman_pothen1} and \cite{coleman_pothen2}, and approximative algorithms for its solution are presented. Further work may be found in \cite{brualdi}, \cite{gilbert_heath}, \cite{pinar}, \cite{stern}. The sparse null space problem can be solved with a canonical greedy algorithm that looks for a sequence of sparsest linearly independent null vectors. On the other hand, it is known that the sparse null vector problem is NP-complete in its full generality. Exceptions are also known. For example, a polynomial algorithm is known if $A$ is the vertex-edge incidence matrix of a graph, see \cite{itai}.

\smallskip
The problem of identifying minimal sets of linearly dependent columns of a matrix occurs also in electrical engineering. Here, the goal is to identify the behavior of the components of a circuit from measurements at a set of test points (input frequencies for an analog circuit, test words for a digital-to-analog converter, physical test nodes). Then it may happen that faults in a group of components are indistinguishable from one another. A simple example is a string of electric light bulbs: If one of the bulbs is defective, the entire string is dark, and it is not immediate which light has to be replaced. The connection with linear algebra comes up as follows. Small deviations from the nominal circuit behavior can be described by a matrix equation $Ax=b$ for the linearized response of a circuit near a desired behavior. The vector $b$ encodes deviations from nominal measurements at various test points, and the unknown $x$ corresponds to deviations of parameters for circuit components from their desired status. If a subset of columns of $A$ is linearly dependent, then the corresponding components of $x$ are not unique, but satisfy some affine relation. Thus the deviations in the behavior of components in this group cannot be determined uniquely, and faults cannot be located unambiguously. Such groups of components are called \emph{ambiguity groups} in the engineering literature, and minimal groups are called \emph{canonical ambiguity groups}.  Their identification is useful to guide design modifications to improve the testability of a circuit. A discussion of this problem and of the  numerical issues associated with it may be found in \cite{stenbak}. This paper also gives an algorithm (essentially a complete search) that can lead to the identification of all canonical ambiguity groups of moderate size. Further algorithmic approaches were presented in \cite{fedi}, \cite{manetti}, and \cite{starzyk}.

\smallskip
A common framework for this problem is provided by matroid theory; see e.g. \cite{oxley} for an introduction. Here, subsets of elements of an abstract base set may be independent or dependent, and it is of interest to determine minimal dependent sets. Concrete examples for dependent sets are linearly dependent sets of vectors or vertices on a closed path in a graph. Minimal dependent sets are called \emph{circuits} in this theory and correspond to closed paths with no repeated vertices in the graph theory context. The problem of enumerating all circuits up to a given size in a matroid is discussed in \cite{boros}. The structure of the set of all circuits up to a given size can be very complex.

\smallskip
In recent work on \emph{compressed sensing} or \emph{compressive sampling}, it has been discovered that the sparsest solution $x$ (in the sense of having the fewest number of non-zero components) of an underdetermined system of equations $A_0x = b$ may often be recovered exactly or approximately by looking for a solution with minimal $l^1$-norm; see e.g. \cite{candes} and \cite{donoho}. An otherwise intractable problem can therefore be attacked with linear programming methods. It appears therefore to be promising to find minimal sets of linearly dependent columns of $A$ by removing a single column (call this column $b$ and call the remaining matrix $A_0$) and looking for a sparse solution of the system $A_0x = b$, using $l^1$-minimization. A sparse solution of this problem immediately results in a sparse null vector. Repeating this for all columns of $A$ would give all sparse null vectors. However, a typical assumption in these results, known as \emph{restricted isometry property}, is that the condition numbers of all $M \times K$ submatrices of $A_0$ have to be uniformly bounded, for some sufficiently large $K$. It is easy to see that in this case $A$ cannot have many disjoint linearly dependent subsets of size $\le K$. Indeed, for coefficient matrices with many disjoint subsets of linearly dependent columns, $l^1$-minimization may fail to detect the sparsest solution of a system of linear equations. We illustrate this with the following example.

\begin{example} Let $B, C$ be $M \times L$ matrices with rank $L$ such that the column spaces have trivial intersection (hence $2L \le M$). For $\beta, \, \gamma \ne 0$, consider the matrix
$$A = (B \quad C \quad \beta B + \gamma C)\, . $$
This matrix has many sets of minimal linearly dependent columns of size 3. Specifically, denoting by $\mathbf{a}_i$ the $i$-th column of $A$, all sets of the form $\{\mathbf{a}_i,\, \mathbf{a}_{i+L}, \, \mathbf{a}_{i+2L}\}, \, 1 \le i \le L$ have this property. Let $0 \ne y\in \R^L$ be arbitrary, and set $b = By$. The sparsest solution of the equation $Ax = b$ is $x = (y^T, \, 0, \, 0)^T$. However, if $\frac{|\gamma|}{|\beta|} + |1 + \gamma| < 1$, then the $l^1$-minimal solution of this problem is $\tilde x =(0, \, -\frac{\gamma}{\beta} y^T, \, -(1+\gamma) y^T)^T$ which has twice as many non-vanishing entries as $x$ if $\gamma \ne -1$. That is, if $-1 < \gamma < 0$ and $|\beta| > 1$ or if $-2 < \gamma < -1$ and $|\beta| > \frac{|\gamma|}{2+\gamma}$, the $l^1$-minimal solution is not the sparsest solution.
\end{example}
In a sense, this paper is concerned with methods for examining the residual class of matrices for which $l^1$-minimal solutions of overdetermined linear systems are not necessarily sparsest.

\smallskip
We finally mention some problems in statistics that are related to the topic of this paper. Consider a linear regression problem $Ax \approx b$, where the columns of $A$ (the predictors) are now assumed to be linearly independent. The least squares solution is $\hat{x} = \left(A^TA\right)^{-1}A^T b$. If $A$ has small singular values, then the estimate $\hat x$ will depend sensitively on small changes in the data $b$ and may in fact be nonsensical. Often this instability occurs because a group of columns of $A$ is nearly linearly dependent, a phenomenon known as \emph{multicollinearity} (see e.g. \cite{kutner}). This happens e.g. when similar measures of the same phenomenon are included in the set of predictors. While multicollinearity may be addressed by judicious choices of the variables that are included in a regression model, the detection of a subset of closely related variables in a data set is often interesting in its own right. A similar problem is the selection of a small set of variables in a regression model that may be used to predict a response. To place this in the present setup, consider the augmented matrix $\tilde A = (A \quad  b)$. One would like to find a minimal set of nearly linearly dependent columns of $\tilde A$ that contains $b$. This specific model selection problem is attacked with machine learning techniques in \cite{neylon}. More broadly, the problem of detecting sets of components that are nearly linearly related in a high-dimensional data set belongs in the area known as \emph{association mining}, see e.g. \cite{ceglar}.

\smallskip
In this paper, two algorithms are discussed that may be used to detect or rule out the existence of small minimal subsets of columns of an $M \times N$ matrix $A$ of rank $m$ that are exactly or nearly linearly dependent. Suppose a single such subset of size $n$ is present. The first method given here, a random search, is expected to detect it with probability $1 - \epsilon$ by examining about $ |\log \epsilon \cdot \rho^{-n}|$ submatrices, where $\rho = \frac{m}{N}$. The submatrices are expected to have about $m(1 - \rho)$ rows and columns each. The second method, a systematic search, is capable of ruling out the presence of such a subset by examining about $\binom{\lceil n/\rho \rceil}{n}$ submatrices of similar size, although recursive calls of the search routine may increase the computational effort. In either case, matrices with small relative rank defects, i.e. $\rho$ close to 1, offer the best chances to detect or rule out the presence of small sets of minimal linearly dependent columns. We also give a modification of the random search method for the problem of finding minimal subsets of columns that are nearly linearly dependent.

\smallskip
The paper is organized as follows. Section 2 contains definitions and basic facts. In section 3, two versions of the random search method for exactly dependent subsets are introduced. The systematic search is presented in section 4. Section 5 contains the modification of the random search method for the case of nearly dependent sets of columns. In section 6, results from numerical experiments are presented. In section 7, we briefly discuss some related problems.

\smallskip
The author would like to thank G. Stenbakken and T. Souders for introducing him to the problem and for many stimulating discussion.

\section{Notation and Auxiliary Results}

Given two nonnegative integers $a \le b$, we denote the set $\{a, a+1,
\dots, b\}$ by $\overline{a,b}$ and identify it with the vector $(a, \, a+1, \dots, b)$. Given $N \ge 1$, let $\cI = \{i_1, i_2,
\dots, i_n\} \subset \overline{1,N}$ with $i_1 < i_2 < \dots$.
We identify $\cI$ with the vector $(i_1, \dots, i_n)$ and write $i_1 = \cI(1),i_2 = \cI(2), \dots$.

\smallskip
Let $A$ be a real $M \times N$ matrix. Using \textsc{Matlab} notation, for a non-empty set $\cI \subset
\overline{1,N}$, we write $A(:,\cI)$ for the $M \times |\cI|$ matrix
obtained by extracting all columns with indices in $\cI$, with the
ordering of the rows remaining the same. Similarly, for a set $\emptyset \ne \cJ \subset
\overline{1,M}$, $A(\cJ,:)$ is the $|\cJ| \times N$ matrix obtained by
extracting all rows whose indices are in $\cJ$. Then $A(\cJ,\cI)$ is
the submatrix obtained from $A$ by extracting the rows with
indices in $\cJ$ and columns with indices in $\cI$. If $y$ is a row or
column vector, $y(\cI)$ is the row or column vector with the components
indexed by $\cI$ extracted from those of $y$. The \emph{support} of the vector $y$ is defined as $supp(y) = \{i \, |\, y_i \ne 0\}$.  We denote the $m \times m$ identity matrix by $\I_m$.

\smallskip
Assume now that $A$ has exact rank $m \le \min(M,N)$. One can then write  $A = LQ$ (\cite{golub}), where $L$ is $M \times m$ lower triangular and $Q$ is $m \times N$ with
orthonormal rows. In particular, $L$ has a left inverse $(L^TL)^{-1}L^T$, and $Q$ has full rank.
Moreover, the null space of $A$ is spanned by the columns of an $N \times (N-m)$
matrix $U$ with orthonormal columns.

\smallskip
We are interested in \emph{minimal linearly dependent} sets of columns
of $A$, that is, subsets $\cJ\subset \overline{1,N}$ such that
$A(:,\cJ)$ does not have full rank, but any matrix $A(:,\cJ')$ with $\cJ'\subset \cJ, \, \cJ' \neq \cJ$ has full rank.
Borrowing the corresponding term
from matroid theory, such a set $\cJ$ or the set of columns indexed by it will be called a {\it circuit} in this paper. Recall that
in the engineering literature on testability, a circuit is called a
\emph{canonical ambiguity group}; cf \cite{manetti, stenbak}. If $\cJ$
is a circuit, there exists a $N \times 1$ null vector $z$ of $A$ (or
equivalently of $Q$) such that $supp(z) = \cJ$. Reversely, there exists a circuit $\cJ \subset supp(z)$ for any null vector $z$. Thus $z$ is a \emph{sparse null vector} for $A$ if $supp(z)$ is a circuit, and $z$ is also a \emph{sparse column vector} for $U$ in this case. Circuits containing only one column clearly must be columns of zeroes in $A$.

\smallskip
A matrix with linearly independent columns does not have any circuits. More generally,
a column $A(:,j)$ does not belong to any circuit of $A$, if this column does not belong to the space spanned by the columns $A(:,i),\, i \ne j$ or equivalently if the rank of the matrix drops if column $j$ is removed. An explicit way of identifying  columns that do not belong to circuits is given below in Lemma \ref{lmm_nocircuit}. We now characterize circuits of $A$.

\medskip
\begin{lemma}\label{lmm_char} Let $A = LQ$ where $Q$ has orthonormal rows, and let $U$ have orthonormal columns such that $QU=0$ and the columns of $U$ span the null space of $A$. Let $\cJ \subset \overline{1,N}$, and let $\cJ^c$ be the complement of $\cJ$ in $\overline{1,N}$.

(i) Then $A(:,\cJ)$ has a non-trivial null vector if and only if $U(\cJ^c,:)$ has a non-trivial null vector.

(ii) The following properties are equivalent:

a) $\cJ$ is a circuit for $A$.

b) $Q(:,\cJ)$ has a one-dimensional null space spanned by a $|\cJ|$ -
vector $w$ that does not vanish anywhere.

c) $U(\cJ^c,:)$ has a non-zero null vector, and for all $k \in \cJ$ and $\cK = \cJ^c
\cup \{k\}$, the matrix $U(\cK,:)$ has full rank.

d) There are $\cK \subset \overline{1,N}$ with $\cJ \subset \cK $ and an $N$-vector $y$ with $supp(y) = \cJ$ such that
$Q(:,\cK)$ has a one-dimensional null space generated by $y(\cK)$.

e) There is  $\cI \subset \overline{1,N}$ with $\cI\cap \cJ =
\emptyset$ such that $U(\cI,:)$ has a one-dimensional null space
generated by a vector $d$, and $supp(Ud)= \cJ$.

\end{lemma}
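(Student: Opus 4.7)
The plan is to reduce everything to linear algebra on $Q$ and $U$ using two observations: since $L$ has the left inverse $(L^TL)^{-1}L^T$, the null spaces $\ker A(:,\cJ)$ and $\ker Q(:,\cJ)$ coincide for every $\cJ$; and since the columns of $U$ are orthonormal and span $\ker A$, the map $v \mapsto Uv$ is a linear isomorphism from $\R^{N-m}$ onto $\ker A$. These two facts will let me interchange dependence questions for $A(:,\cJ)$ with questions about $Q(:,\cJ)$ on the column side and questions about $U(\cJ^c,:)$ on the null-space side.

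For (i), I would exhibit the bijection $\ker U(\cJ^c,:) \longleftrightarrow \ker A(:,\cJ)$ defined by $v \mapsto (Uv)(\cJ)$. If $U(\cJ^c,:)v = 0$, then $Uv \in \ker A$ vanishes on $\cJ^c$, so its restriction to $\cJ$ is a null vector of $A(:,\cJ)$; conversely, the zero-extension of a null vector of $A(:,\cJ)$ lies in $\ker A = $ range of $U$, so equals $Uv$ for a unique $v$, and this $v$ lies in $\ker U(\cJ^c,:)$. These two maps are mutually inverse linear maps, so the null spaces are isomorphic.

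For (ii) I would make (b) the hub. For (a) $\Leftrightarrow$ (b), the nontrivial direction observes that a null vector $w$ of $Q(:,\cJ)$ with a zero entry at some index $k$ would yield a nontrivial null vector of $Q(:,\cJ \setminus \{k\})$ by deletion, contradicting minimality; any two independent null vectors can be linearly combined to force such a zero entry, so $\ker Q(:,\cJ)$ must be one-dimensional. The converse is a standard zero-padding argument, using that a nowhere-zero generator cannot be a zero-padded vector. Then (a) $\Leftrightarrow$ (c) follows by applying (i) twice, once to $\cJ$ (dependence) and once to each $\cJ \setminus \{k\}$ (independence). For (a) $\Rightarrow$ (d), take $\cK = \cJ$ and $y$ the zero-extension of the generator from (b); for the converse, zero-extension embeds $\ker Q(:,\cJ)$ into the one-dimensional $\ker Q(:,\cK) = \langle y(\cK) \rangle$, and $supp(y) = \cJ$ then yields (b). For (a) $\Rightarrow$ (e), take $\cI = \cJ^c$ and $d$ the generator of $\ker U(\cI,:)$; via (i), $(Ud)(\cJ)$ is the nowhere-zero generator of $\ker A(:,\cJ)$, so $supp(Ud) = \cJ$.

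The step I expect to require the most care is the converse of (e): since $\cI \subsetneq \cJ^c$ is allowed, $\ker U(\cI,:)$ can a priori be strictly larger than $\ker U(\cJ^c,:)$, and only the latter is linked by (i) to $\ker A(:,\cJ)$. The key observation is that $supp(Ud) = \cJ$ forces $(Ud)(\cJ^c) = 0$, i.e., $d \in \ker U(\cJ^c,:)$. Combined with the trivial inclusion $\ker U(\cJ^c,:) \subset \ker U(\cI,:) = \langle d \rangle$, this sandwich yields $\ker U(\cJ^c,:) = \langle d \rangle$. Invoking (i), $\ker A(:,\cJ)$ is one-dimensional, generated by $(Ud)(\cJ)$, which has no zero entries since $supp(Ud) = \cJ$; condition (b) then applies, completing the cycle.
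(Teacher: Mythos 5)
Your proposal is correct and follows essentially the same route as the paper: both use the left-invertibility of $L$ to identify $\ker A(:,\cJ)$ with $\ker Q(:,\cJ)$, prove (i) via the isomorphism $d \mapsto Ud$ onto the null space of $A$, and make b) the hub for the equivalences in (ii). The only cosmetic difference is in e), where the paper rules out a smaller support and a second independent null vector of $U(\cI,:)$ by taking linear combinations, while you use the sandwich $\langle d\rangle \subset \ker U(\cJ^c,:) \subset \ker U(\cI,:)$ together with the isomorphism from (i); the two arguments are equivalent in substance.
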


\begin{proof} To prove (i), observe that any null vector $v$ of $A$ is of the form $v = Ud$, and the map $v \mapsto d$ is an isomorphism between the null space of $A$ and $\R^{N-m}$. If $supp(v) \subset \cJ$ and $v \ne 0$, then $d \ne 0$ and $v(\cJ^c) = U(\cJ^c,:)d = 0$. Reversely, if $U(\cJ^c,:)w = 0$ for some $w \ne 0$, then $supp(Uw) \subset \cJ$ and $Uw$ is a non-trivial null vector of $A$.

\medskip
We now turn to (ii). First note that $A(:,\cK)$ and $Q(:,\cK)$ have the same
null space for any $\cK \subset \overline{1,N}$. Let us prove that a) implies b). If a) holds, the null space of $Q(:,\cJ)$ is non-trivial. All non-zero null vectors of $Q(:\cJ)$ must have support equal to $\cJ$, since otherwise $\cJ$ would not be minimal. If the null space of $Q(:,\cJ)$ had dimension larger than 1, a linear combination of two null vectors could be found that vanishes at an index in $\cJ$ but not everywhere. Thus
the null space a) of $Q(:,\cJ)$ must be one-dimensional, and b) follows. Reversely, if b) holds, then there exists a nontrivial null vector $v$ of $A(:,\cJ)$. A non-trivial null vector of $A(:,\cJ)$ with strictly smaller support would be linearly independent of $v$, which is not allowed if b) holds. Hence $\cJ$ is a circuit of $A$.

\smallskip To prove the equivalence of a) and c), let $\cJ$ be a circuit of $A$. Then $U(\cJ^c,:)$ has a non-trivial null vector by part (i). Now assume that  for $\cK$ as in the assumption, $U(\cK,:)$ does not have full
rank. By (i), $A(:,\cK^c$ does not have full rank and $\cK^c \subset \cJ, \, \cK^c \ne \cJ$, contradicting the minimality of $\cJ$. Reversely, assume c). By (i), $\cJ$ contains a circuit of $A$. If $\cJ$ were not minimal, we could find $\cJ' = \cJ - \{k\}$ such that $A(:,\cJ')$ does not have full rank. But then with $\cK = \cJ'^c = \cJ \cup \{k\}$, $U(\cK,:)$ will not have full rank by (i), contradicting c).

\smallskip
Clearly b) implies d) - just take $\cK = \cJ$. For the reverse
conclusion, just take $w = y(\cJ)$. Then $w$ does not vanish
anywhere and spans the null space of $Q(:,\cJ)$.

\smallskip
To prove that a) - d) together imply e), choose $\cI = \cJ^c$. By (i), there is a null vector
$d$ of $U(\cJ^c,:)$. Then clearly $supp(Ud) \subset \cJ$ for all such null vectors. If the
inclusion were strict, e.g. $(Ud)_k = 0$ with $k \in \cJ$, then
$U(\cJ^c \cup \{k\},:)$ would not have full rank, contradicting c). Also, if $U(\cJ^c,:)$ had two linearly independent null vectors $d$ and $d'$, then the support of a suitable linear combination of $Ud$ and $Ud'$ would be strictly contained in $\cJ$, again contradicting c). Reversely, if e) holds, then $U(\cJ^c,:)d = 0$ and hence $\cJ$ contains a circuit of $A$ by (i). If $\cJ$ were not a circuit of $A$, e.g. if we could find a circuit $\cJ' \subset \cJ, \, \cJ' \ne \cJ$ of $A$, we could find a null vector $w = Ud'$ of $A$ with $supp(w) = \cJ'$ and $U(\cJ'^c,:)d' = 0$, hence also $U(\cI,:)d' =0$. Now $w = Ud'$ and $x = Ud$ are linearly independent, since they do not have the same support. Hence also $d$ and $d'$ are linearly independent, contradicting the assumption that $U(\cI,:)$ has a one-dimensional null space. Therefore $\cJ$ is a circuit of $A$. So e) implies a). This completes the proof.
\end{proof}

With these preparations, a prototype algorithm for detecting
circuits can be described. It operates on the matrix $Q$.

\begin{algorithm}Let $A = LQ$ as above, where $Q$ is $m \times N$ and has rank $m$.

1. Choose a subset $\cK \subset \overline{1,N}$.
2. Determine a matrix $Y$ whose columns are a basis of the null
space of $Q(:,\cK)$.

3. If $Y$ has rank 1, then $\cJ = supp(Y)$ is a circuit of $A$.

4. If $Y$ is the empty matrix or has more than one column, choose
a different subset $\cK$ and repeat the procedure.
\end{algorithm}

The algorithm stops with a circuit due to  Lemma 2.1 d). The
questions then arise how to choose $\cK$, what the size of this
subset should be, whether one can do better if $Y$ has more than
one column, how to conclude that there (probably) is no circuit of a given
size, and so on.

\smallskip
There is also a version of this prototype algorithm that operates
on the matrix $U$. It was proposed in \cite{manetti}. This
algorithm stops after finding a circuit, due to Lemma 2.1 e). The
same questions about the choice of $\cI$ and alternate stopping
criteria arise.
\begin{algorithm} Let $U$ be a $N \times (N-m)$ matrix whose
columns span the null space of $A$.

1. Choose a subset $\cI \subset \overline{1,N}$.

2. Determine a matrix $Z$ whose columns are a basis of the null
space of $U(\cI,:)$.

3. If $Z$ has rank 1 then $\cJ = supp(UZ)$ is a circuit of $A$.

4. If $Z$ is the empty matrix or has more than one column, choose
a different subset $\cI$ and repeat the procedure.
\end{algorithm}

\medskip
Let us now assume that the last $m$ columns of $Q$ form an
invertible matrix. This is always possible after permuting
columns. Thus $Q = \left( Q_1, \, Q_2 \right) = Q_2 \left( Q^*,
\I_m \right)$, where $Q^* = Q_2^{-1}Q_1$ is $m \times (N-m)$, and therefore
\begin{equation} \label{eq_decomp}A = LQ = \tilde L\left(Q^*, \I_m \right)
\end{equation}
with $\tilde L = LQ_2$. This decomposition was exploited in \cite{starzyk} to find circuits.
Partitioning $U = \left(\begin{matrix} U_1 \\ U_2
\end{matrix}\right)$, where $U_1$ is $(N-m) \times (N-m)$ and
$U_2$ is $m \times (N-m)$, we see that $0 = QU = Q_2 \left( Q^*
U_1 + U_2 \right)$. Then $U_1$ must be invertible, since otherwise
$U$ could not have full rank. Write
\begin{equation} U^* = U_2U_1^{-1}, \, C = \left(\begin{matrix}
\I_{N-m} \\ U^* \end{matrix} \right), \, U = CU_1 \, ,
\end{equation}
 then it follows that
\[Q^* + U^* = 0  \quad \text{and} \quad AC = 0 .\]
The matrix $C$ (or rather the set of its columns) is commonly called a \emph{fundamental null basis}, see e.g. \cite{coleman_pothen1}.
It is now easy to give a version of Lemma 2.1 that uses only
properties of $Q^*$ (or $U^*$), i.e. that refers only to the fundamental null basis $C$. For any $\cK \subset
\overline{1,N}$, set $\cK_1 = \cK \cap \overline{1,N-m}, \, \cK_2 = \{i \in \overline{1,m} \big| i+N-m \in \cK
\}$, and $\cK_{2,c} = \{i \in \overline{1,m}
\big| i+N-m \notin \cK\}$.

\medskip
\begin{lemma} Let $\cJ \subset \overline{1,N}$, and assume that the factorization (\ref{eq_decomp}) holds. The following properties are equivalent:

a) $\cJ$ is a circuit for $A$.

b) $Q^*(\cJ_{2,c},\cJ_1)$ has a one-dimensional null space spanned by a
vector $w$ that does not vanish anywhere, and $Q^*(\cJ_2,\cJ_1)w$ does
not vanish anywhere.

c) There is an index set $\cK$ with $\cJ \subset \cK \subset \overline{1,N}$ such
that $Q^*(\cK_{2,c},\cK_1)$ has a one-dimensional null space generated
by a vector $w$, with
\[supp(w) = \{i \big| \cK_1(i) \in \cJ_1\}, \quad supp(Q^*(\cK_2,\cK_1)w) =
\{i \big| \cK_2(i) \in \cJ_2\}\, .\]
\end{lemma}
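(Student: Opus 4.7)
The plan is to reduce everything to Lemma 2.1 by exploiting the factorization $Q = Q_2(Q^*,\I_m)$ with $Q_2$ invertible, so that for any $\cK \subset \overline{1,N}$ the null spaces of $Q(:,\cK)$ and of $(Q^*,\I_m)(:,\cK)$ coincide. I would then translate the conditions (b) and (d) of Lemma 2.1 into statements about $Q^*$ by a direct block analysis.

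Concretely, given $\cK$ with the associated partition $\cK_1,\cK_2,\cK_{2,c}$, the matrix $(Q^*,\I_m)(:,\cK)$ consists of the columns $Q^*(:,\cK_1)$ followed by the columns $\I_m(:,\cK_2)$. A vector on $\cK$ split as $(v_1,v_2)$ (with $v_1$ on $\cK_1$ and $v_2$ on $\cK_2$) lies in the null space iff
\[ Q^*(:,\cK_1)v_1 + \I_m(:,\cK_2)v_2 = 0 . \]
Reading this on the row blocks $\cK_{2,c}$ and $\cK_2$ (noting that the second summand is zero on rows in $\cK_{2,c}$) yields the two equations $Q^*(\cK_{2,c},\cK_1)v_1 = 0$ and $v_2 = -Q^*(\cK_2,\cK_1)v_1$. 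Hence the map
\[ v_1 \ \longmapsto\ \bigl(v_1,\, -Q^*(\cK_2,\cK_1)v_1\bigr) \]
is a linear isomorphism between the null space of $Q^*(\cK_{2,c},\cK_1)$ and the null space of $Q(:,\cK)$, and it preserves vanishing patterns in the evident way: the resulting vector vanishes at an index of $\cK_1$ iff $v_1$ does there, and at an index of $\cK_2$ iff the corresponding coordinate of $Q^*(\cK_2,\cK_1)v_1$ vanishes.

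The equivalence a)$\Leftrightarrow$b) is then immediate by applying this isomorphism with $\cK = \cJ$ to Lemma 2.1 b): the condition that $Q(:,\cJ)$ have a one-dimensional null space spanned by a nowhere-vanishing vector translates precisely to the one-dimensionality of the null space of $Q^*(\cJ_{2,c},\cJ_1)$, together with the non-vanishing of both $w$ (accounting for the $\cJ_1$ coordinates) and $Q^*(\cJ_2,\cJ_1)w$ (accounting for the $\cJ_2$ coordinates). For a)$\Leftrightarrow$c), I apply the same isomorphism to Lemma 2.1 d): for an enclosing set $\cK \supset \cJ$, a generator $y(\cK)$ of the one-dimensional null space of $Q(:,\cK)$ with $\mathrm{supp}(y) = \cJ$ corresponds under the isomorphism to a generator $w$ of the null space of $Q^*(\cK_{2,c},\cK_1)$ whose nonzero entries are exactly the indices mapping into $\cJ_1$, and for which $-Q^*(\cK_2,\cK_1)w$ is nonzero exactly at indices mapping into $\cJ_2$; this is the support condition stated in c).

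The main obstacle is not mathematical depth but notational discipline: one must juggle the three index sets $\cK_1,\cK_2,\cK_{2,c}$, the back-shift by $N-m$ relating $\cK_2$ to actual column indices of $A$, and the induced identification of coordinates of $w$ and of $Q^*(\cK_2,\cK_1)w$ with positions in $\cJ_1$ and $\cJ_2$. Once the block decomposition of $(Q^*,\I_m)(:,\cK)$ is written down and the isomorphism above is established, both equivalences are read off mechanically from Lemma 2.1.
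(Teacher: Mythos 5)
Your proposal is correct and follows essentially the same route as the paper: the paper likewise passes to the right factor $(Q^*,\I_m)$, observes that $Q(:,\cK)y(\cK)=0$ is equivalent to $Q^*(\cK_{2,c},\cK_1)y(\cK_1)=0$ together with $y(\cK_2)=-Q^*(\cK_2,\cK_1)y(\cK_1)$, and then reads off the equivalences from parts b) and d) of Lemma \ref{lmm_char}. Your explicit isomorphism $v_1 \mapsto (v_1,\,-Q^*(\cK_2,\cK_1)v_1)$ is just a slightly more detailed rendering of that same argument.
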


\begin{proof}
Let $Q = \left(Q^*,\I_{m}\right)$. Let $y$ be an $N$-vector, then
$Q(:,\cJ)y(\cJ) = 0$ if and only if $Q^*(\cJ_{2,c},\cJ_1)y(\cJ_1) =0$ and
$Q^*(\cJ_2,\cJ_1)y(\cJ_1) = - y(\cJ_2)$. By Lemma 2.1, statements a) and b)
are therefore equivalent.

\smallskip
To show that a) and c) are equivalent, one shows similarly that
property d) in Lemma 2.1 reduces to property c) in the present
situation.
\end{proof}

\smallskip
Columns of $A$ that do not belong to any circuit can be easily identified if the factorization (\ref{eq_decomp}) or equivalently a fundamental null basis $C = \left(\begin{matrix}
\I_{N-m} \\ U^* \end{matrix}\right)$ are given.

\medskip
\begin{lemma} \label{lmm_nocircuit} Let $A$ be given such that the factorization (\ref{eq_decomp}) holds with a full rank left factor $\tilde L$. Column $j$ of $A$ belongs to a circuit of $A$ if and only if either $1 \le j \le N-m$ or if $N-m+1 \le j \le N$ and row $j-N+m$ of $Q^*$ does not vanish identically.
\end{lemma}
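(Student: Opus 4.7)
The plan is to reduce the statement to an assertion about $\tilde Q := (Q^*, \I_m)$ and then handle the two ranges of $j$ separately.

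First I would observe that since $\tilde L$ has full column rank $m$, the map $x \mapsto \tilde L x$ is injective on $\R^m$, so any linear combination of columns of $A = \tilde L \tilde Q$ vanishes if and only if the corresponding linear combination of columns of $\tilde Q$ vanishes. In particular, $A$ and $\tilde Q$ share the same circuits, so it suffices to characterize when column $j$ of $\tilde Q$ lies on a circuit. By the standard matroid fact pointed out in the paragraph preceding the lemma, this is equivalent to asking whether $\tilde Q(:,j)$ belongs to the span of the remaining columns of $\tilde Q$. For completeness I would include the short converse (the one direction not explicitly justified in the paper): if $\tilde Q(:,j)$ is in the span of the other columns, pick a subset $S \subset \overline{1,N}\setminus\{j\}$ that is minimal with respect to this spanning property; by minimality $S$ is linearly independent, and hence $S \cup \{j\}$ is a circuit containing $j$.

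Next I would split into cases. In Case 1, $1 \le j \le N-m$: here $\tilde Q(:,j) = Q^*(:,j) \in \R^m$, which is automatically a linear combination of the last $m$ columns $e_1,\dots,e_m$ of $\tilde Q$. Hence every such $j$ lies in some circuit, matching the claim.

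In Case 2, $j = N-m+i$ with $1 \le i \le m$, so $\tilde Q(:,j) = e_i$. Every other column of $\tilde Q$ has $i$-th entry either $Q^*(i,k)$ (if it is the $k$-th column of $Q^*$) or $0$ (if it is some $e_{i'}$ with $i' \neq i$). If row $i$ of $Q^*$ is identically zero, every linear combination of the remaining columns has a zero in entry $i$ and so cannot equal $e_i$; hence $j$ belongs to no circuit. If instead $Q^*(i,k_0) \neq 0$ for some $k_0$, then the identity $Q^*(:,k_0) = \sum_{i'=1}^m Q^*(i',k_0)\, e_{i'}$ can be solved for $e_i$, expressing $\tilde Q(:,j)$ as a combination of other columns and so placing $j$ in some circuit.

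I do not expect a genuine obstacle. The only slightly delicate point is the matroid converse used in the reduction step, but this is the standard fundamental-circuit argument and takes only a sentence; everything else is direct inspection of the block structure $(Q^*, \I_m)$.
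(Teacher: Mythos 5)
Your proposal is correct and follows essentially the same route as the paper: reduce to the right factor $(Q^*,\I_m)$ using the full column rank of $\tilde L$, characterize membership in a circuit by whether the column lies in the span of the remaining columns, and read off the two cases from the block structure. The only difference is cosmetic — the paper exhibits the circuit explicitly as the unique full-support dependency between column $j$ (or column $k_0$) and the relevant unit columns, while you invoke the standard minimal-spanning-set (fundamental circuit) argument to supply the converse direction, which the paper leaves implicit.
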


\begin{proof} Note that row $j-N+m$ of the right factor $(Q^*,\I_m)$ contains a 1 in column $j$. Let $e_k$ denote the $k$-th standard unit vector. If $1 \le j \le N-m$, then column $j$ of $\tilde L^{-1}A$ satisfies
\[\tilde L^{-1}A(:,j) = Q^*(:,j) = \sum_{Q^*(\nu,j) \ne 0} Q^*(\nu,j) e_{\nu} =  \sum_{Q^*(\nu,j) \ne 0} Q^*(\nu,j) A(:,N-m+\nu) \, .
\]
Thus this column is a unique linear combination with non-vanishing coefficients of some of the last $m$ columns and therefore is in a circuit. If $N-m+1 \le j \le N$ and $Q^*(j-N+m,k) \ne 0$ for some $k$, then again
\[\tilde L^{-1}A(:,k) = Q^*(:,k) = \sum_{Q^*(\nu,k) \ne 0} Q^*(\nu,k) e_{\nu} =  \sum_{Q^*(\nu,k) \ne 0} Q^*(\nu,k) A(:,N-m+\nu) \, .
\]
The term corresponding to $\nu = j-N+m$ cannot be dropped, hence column $j$ belongs to a circuit of $A$.

\smallskip
Reversely, if $N-m+1 \le j \le N$ and if $Q^*(j-N+m,:) = 0$, then clearly the rank of $(Q^*,\I_m)$ drops if column $j$ is removed, since the remaining matrix will now have a row of zeroes. Hence column $j$ does not belong to a circuit of $A$.
\end{proof}

\section{Random Search for Circuits}
In this section, we give an \emph{Monte Carlo} algorithm for
finding circuits up to a certain size, say $n$. If there is no
such circuit, the algorithm always states this correctly. If there
is such a circuit, it is found with probability $1-\epsilon$ in
$K$ steps, where $K \approx |\log \epsilon|
\left(\frac{N}{m}\right)^n$ and in each step typically a null
vector of a submatrix of $Q$ or $U$ must be found.

\begin{algorithm} \label{rand_alg}Let $A = LQ$ be as above. Let $n$ (the desired circuit size) and $\epsilon > 0$ be given.

0. Set $p = 1$.

While $p > \epsilon$,

1. Choose a random subset $\cK \subset \overline{1,N}$ with $|\cK| = m+1$,
uniformly from all these subsets, independently from previous selections.

2. Determine a full rank matrix $Z$ whose columns span the null
space of $Q(:,\cK)$.

3.a) If $Z = w$ has rank 1, then set
\[\cJ = \{ \cK(i) | i \in supp(w)\}\, . \]
The set $\cJ$ is a circuit of $A$. If $|\cJ| \le n$, STOP and return $\cJ$.
Otherwise set $r = |\cK|$, replace $p$ with
\[p \left(1 - \frac{\binom{N-n}{r-n}}{\binom{N}{r}}\right)\,,\]
and go to step 1.

3.b) If $rank(Z) = l >1$, replace $\cK$ with a random subset $\tilde \cK \subset \cK$ such that
$|\tilde \cK| = |\cK| - l + 1$ , selected uniformly from all such subsets of $\cK$, and go to step 2.
\end{algorithm}

Since the matrix $Q(:,\cK)$ is $m \times (m+1)$, step 2 always finds a non-trivial matrix $Z$ in the first attempt. Should $rank(Z) = l$ be larger than 1, and step 3.c be carried out with a smaller $\cK$, then the new matrix $Q(:,\cK)$ still has a non-trivial null space, but strictly smaller dimensions. Therefore, the method eventually ends up in step 3.a, i.e. it finds a subset $\cK$ of size $r$ such that $Q(:,\cK)$ has a one-dimensional null space. It then inspects this subset to determine if it contains a circuit of the desired size. A single pass of the algorithm that ends in step 3.a) will be called at \emph{trial}. It may involve several computations of null space bases $Z$.

\smallskip
A given circuit $\cJ$ of size $n$ will be detected in step 3.a if $\cJ \subset \cK$. This happens with probability $\frac{\binom{N-n}{r-n}}{\binom{N}{r}}$ and fails to happen with probability $1 - \frac{\binom{N-n}{r-n}}{\binom{N}{r}}$, where $r = |\cK|$. At any stage of the algorithm, the value $p$ therefore is the probability that a fixed circuit $\cJ$ of size $n$ would not have been detected up to this stage. If this probability is very small (less than $\epsilon$), then we may be confident (with a confidence level of $1 - \epsilon$) that no such circuit exists; hence the termination criterion.

\smallskip
Let $\rho = \frac{m+1}{N}$ and $\delta = \frac{n-1}{N}$. If step 3.a is reached immediately (the generic case), then  $|\cK| = m+1$, and the probability that a given circuit $\cJ$ of size $n$ is detected equals
\begin{equation}
\label{eq_prob_1} \frac{\binom{N-n}{m+1-n}}{\binom{N}{m+1}} = \prod_{j=1}^n \frac{m-n+j+1}{N-n+j} \ge \left(\frac{m+1}{N}\cdot \frac{m-n+2}{N-n+1} \right)^{n/2} = \left( \frac{\rho (\rho - \delta)}{{1-\delta}} \right)^{n/2}.
\end{equation}
For small $\delta$, this is approximately equal to $\rho^n$. Therefore, the probability of not detecting a fixed circuit of size $n$ with $K$ independent choices of $\cK$ is approximately bounded by $(1-\rho^n)^K$. This is certainly smaller than a given $\epsilon$ if $K \ge \frac{-\log \epsilon}{\rho^n}$. If there is indeed a circuit $\cJ$ of size $n$, the expected number of trials until it is found is approximately $\rho^{-n}$. Consider in particular instances of the problem where $\rho = \frac{m+1}{N} \ge \rho_0$, where $\rho_0 > 0$ is given, and $n$ is fixed. As $N,\, m \to \infty$, the probability that a given circuit of size $n$ is detected in a single trial is bounded below asymptotically by $\rho_0^{-n}$. Therefore, the expected number of steps to find a circuit of size $n$ is exponential in $n$, but does not depend directly on the problem size $Nm$. One can expect that circuits of moderate size $n$ are found rapidly if the rank defect $N-m$ is small relative to $N$.

\smallskip
If there is more than one circuit of size $\le n$, it will take fewer trials to find one of them. Suppose there are $k$ circuits of size $c_j$, $j = 1, \dots, k$, and assume that the circuits are all disjoint and their sizes are small compared to $N$. The probability of selecting a set $\cK$ of size $m+1$ that contains a specific circuit of size $c_j$ then is $p_j \approx \rho^{c_j}$. The probability of selecting a $\cK$ that contains one of these circuits is approximately
\begin{equation} \label{eq_prob_2} 1 - \prod_{j=1}^k (1 - \rho^{c_j}) \, .
\end{equation}
The reciprocal of this number is (close to) the expected number of trials until a circuit is found.

\smallskip
We now give a version of this algorithm that operates on the matrix
$Q^*$, where  $A = L_1(Q^*,\I_m)$, or equivalently on the fundamental null matrix $ C$. As was noted in the previous section, this may require permuting the columns of $A$.

\begin{algorithm} Let $A = L_1(Q^*,\I_m)$ be as above.
Let $n$ (the desired circuit size) and $\epsilon > 0$ be given.

0. Set $p = 1$.

While $p > \epsilon$,

1. Choose a random subset $\cK \subset \overline{1,N}$ with $|\cK| = m+1$,
uniformly from all these subsets, independently from previous selections. Set $\cK_1 = \cK \cap \overline{1,N-m}, \, \cK_2 = \{i \in \overline{1,m} \big| i+N-m \in \cK
\}$, and $ \cK_{2,c} = \{i \in \overline{1,m} \big| i+N-m \notin \cK\}$ .

2. Determine a full rank matrix $Z$ whose columns span the null
space of $Q^*(\cK_{2,c},\cK_1)$.

3.a) If $Z = w$ has rank 1, then set
\[\cJ = \{ \cK_1(i) | i \in supp(w)
\} \cup  \{\cK_2(i)+N-m | i \in supp\left(Q^*(\cK_2,\cK_1)w\right)\}. \]
The set $\cJ$ is a circuit of $A$. If $|\cJ| \le n$, STOP and return $\cJ$. Otherwise set $r = |\cK|$, replace $p$ with
\[p \left(1 - \frac{\binom{N-n}{r}}{\binom{N}{r}}\right)\,,\]
and go to step 1.

3.b) If $rank(Z) = l >1$, replace $\cK$ with a random subset $\tilde \cK \subset \cK$ such that
$|\tilde \cK| = |\cK| - l + 1$ , selected uniformly from all such subsets of $\cK$, and go to step 2.
\end{algorithm}

After choosing $\cK$ in step 1, the matrix $Q^*(\cK_{2,c},\cK_1)$ is $k \times (k+1)$, where $k = |\cK_{2,c}| \le \min (m, N-m)$, and therefore step 2 always finds a non-trivial matrix $Z$ in the first attempt. Should $rank(Z) = l$ be larger than 1, and step 3.b be carried out, then the new matrix $Q^*(\cK_{2,c},\cK_1)$ is obtained from the previous one by deleting $s$ columns and adding $l-1-s$ rows, where $0 \le r \le l-1$. As a result, this matrix still has a non-trivial null space, but strictly smaller dimensions. Therefore, the method eventually ends up in step 3.a, i.e. it finds a subset $\cK$ of size $r$ such that $Q^*(\cK_{2,c},\cK_1)$ has a one-dimensional null space. It then inspects this subset to determine if it contains a circuit of the desired size.

\smallskip
If $\cK$ is selected randomly such that $|\cK| = m+1$, then $Q^*(\cK_{2,c},\cK_1)$ is $k \times (k+1)$ with $k\le m, \, k \le N-m$, and $k$ has a hypergeometric distribution with expected value $\frac{(m+1)(N-m)}{N} = (m+1)\left(1 - \frac{m}{N}\right) \approx N \rho (1-\rho) $. The expected computational effort to find a basis of the null space of such a matrix is proportional to $N^3 \rho^3 (1-\rho)^3$, to leading order (\cite{golub}). This implies that considerable savings are achieved by precomputing $A = L_1(Q^*,\I_m)$ if the rank $m$ or the rank defect $N-m$ are small relative to $N$, since then $\rho \ll 1$ or $1-\rho \ll1$.

\section{Excluding Circuits of a Certain Size}
In this section, a deterministic algorithm will be given that
allows one to conclude with certainty that a matrix $A$ does not
have a circuit of size $n$. This is a derandomized version of the \emph{Monte
Carlo} algorithm of the previous section. It is based on the following observation.

\medskip
\begin{lemma}\label{lmm_subsets} Let $A$ be an $M \times N$ matrix of rank $m$. Let $\cJ_1, \dots, \cJ_r \subset \overline{1,N}$ be disjoint and non-empty such that $\bigcup_j \cJ_j = \overline{1,N}$. For $\cC \subset \overline{1,r}$, set $\cJ(\cC) = \bigcup_{j \in \cC} \cJ_j$. Assume that $A$ has a circuit of size $n$. Then there exists  $\cC$ with $|\cC| = n$ such that $A(:,\cJ(\cC))$ has a circuit of size $n$.
\end{lemma}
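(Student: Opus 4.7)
The plan is to extract a circuit $\cJ^*$ of $A$ with $|\cJ^*| = n$, identify which of the blocks $\cJ_j$ meet $\cJ^*$, and then pad the resulting index set up to size $n$ inside $\overline{1,r}$ to obtain the desired $\cC$.

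Concretely, I would first observe that since $\cJ_1,\dots,\cJ_r$ partition $\overline{1,N}$, each column index $i \in \cJ^*$ lies in a unique block, call it $\cJ_{j_i}$. Setting $\cC_0 = \{\,j_i \,|\, i \in \cJ^*\,\}$ then yields a set with $|\cC_0| \le |\cJ^*| = n$ and $\cJ^* \subseteq \cJ(\cC_0)$. To satisfy the cardinality requirement of the lemma, I would enlarge $\cC_0$ arbitrarily to some $\cC \subseteq \overline{1,r}$ with $|\cC| = n$; this presupposes $r \ge n$, which is tacitly needed for the conclusion to be non-vacuous. Monotonicity of $\cJ(\cdot)$ then gives $\cJ^* \subseteq \cJ(\cC)$.

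Next I would check that $\cJ^*$ remains a circuit of the submatrix $A(:,\cJ(\cC))$. This is immediate from the definition of a circuit: minimal linear dependence of the columns of $A$ indexed by $\cJ^*$, together with linear independence of every proper subset of those columns, is an intrinsic property of the family $\{A(:,i)\}_{i \in \cJ^*}$ and does not depend on the presence or absence of other columns in the ambient matrix. Since $\cJ^* \subseteq \cJ(\cC)$, all columns of $\cJ^*$ appear as columns of $A(:,\cJ(\cC))$, so $\cJ^*$ (viewed as an index subset of $\cJ(\cC)$ via the inclusion) still witnesses a circuit of size $n$ in the submatrix.

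The argument is essentially bookkeeping, so there is no substantive obstacle; the only mild point to be careful about is the padding step, where the hypothesis $r \ge n$ is implicitly required, and the observation that the circuit property is preserved under restriction to any column submatrix containing the circuit, which is a direct consequence of the definition rather than of Lemma~\ref{lmm_char}.
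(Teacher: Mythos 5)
Your proposal is correct and follows essentially the same route as the paper: identify the blocks meeting a size-$n$ circuit, pad the resulting index set up to cardinality $n$ (the paper likewise tacitly uses $r \ge n$, which its Algorithm \ref{syst_alg} guarantees), and observe the circuit survives in the column submatrix. The only cosmetic difference is that the paper phrases the last step by restricting a null vector supported on the circuit, whereas you appeal directly to the fact that minimal linear dependence is intrinsic to the selected columns; both are valid.
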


\smallskip
\begin{proof}If $A$ has a circuit $\cI$ of size $n$, we can set $\tilde \cC = \{j \, | \, \cJ_j \cap \cI \ne \emptyset\}$. This set has at most $n$ elements, and $\cI \subset \cJ(\tilde\cC)$. Enlarging $\tilde \cC$ if necessary, we obtain a set $\cC \subset \overline{1,r}$ with $n$ elements such that $\cI \subset \cJ(\cC)$. A vector from the null space of $A$ that is supported on $\cI$ may be restricted to $\cJ(\cC)$, resulting in a vector in the null space of $A(:,\cJ(\cC))$ with the same support. Hence also $A(:,\cJ(\cC))$ has a circuit of size $n$.
\end{proof}

\smallskip
This observation is the basis of a recursive algorithm to determine if a given matrix $A$ has a circuit of size $n$ or smaller. The algorithm returns the value $true$ if there is a circuit of size at most $n$ and $false$ otherwise.

\smallskip
\begin{algorithm} \label{syst_alg} Let $A$ be an $M \times N$ with $rank(A) = m < \min(M,N)$. Compute a logical variable $\alpha = circuitfind(A,n)$ as follows.

0. Compute $A = LQ$ such that $Q$ is $m \times N$. Find the smallest integer $r$ such $n \lceil \frac{N}{r} \rceil \le m+1$ and set $k = \lfloor \frac{N}{r}\rfloor$. Find disjoints subsets $\cJ_1, \dots, \cJ_r \subset \overline{1,N}$, all of size $k$ or $k+1$. Set $\cC = \overline{1,n} \subset \overline{1,r}$ and set $\alpha = false$.

1. While $\cC \ne \overline{r-n+1,r}$ and $\alpha = false$, do the following:

\smallskip
1.a) Set $\cJ(\cC) = \bigcup_{j \in \cC} \cJ_j$ and find the dimension $d$ of the null space of $Q(:,\cJ(\cC))$.

\smallskip
If $d = 0$, replace $\cC$ with the next $n$-element subset of $\overline{1,r}$, in lexicographical order and return to 1.

\smallskip
If $d=1$, find a non-trivial null vector $z$ of $Q(:,\cJ(\cC))$. If $supp(z)$ has no more than $n$ elements, STOP and return $\alpha = true$. Otherwise replace $\cC$ with the next $n$-element subset of $\overline{1,r}$, in lexicographical order and return to 1.

\smallskip
If $d>1$, set $\alpha = circuitfind(Q(:,\cJ(\cC)),n)$, computed with this algorithm. If $\alpha = true$, STOP. Otherwise replace $\cC$ with the next $n$-element subset of $\overline{1,r}$, in lexicographical order and return to 1.
\end{algorithm}

\medskip
Since $m+1 \le N$, the integer $r$ computed in step 0 is never smaller than $n$. In fact $n = r$ is only possible if $N=m+1$. In this case, the only possible circuit of $A$ is the common support of the vectors in the one-dimensional null space of $A$. Therefore, one obtains $r > n$ except in this trivial case. Then there are $\binom{r}{n}$ possible matrices $Q(:,\cJ(\cC))$ which may be examined in lexicographic order, starting with $\cC = \{1, \dots, n\}$ and ending with $\cC = \{r-n+1, \dots, r\}$. If $A$ has a circuit of size $n$ or smaller, one of these matrices must also have such a circuit, by Lemma \ref{lmm_subsets}. All these matrices have $m$ rows and at most $m+1$ columns, thus the dimension $d$ of their null spaces may range from $d=0$ to $d = m$. If $d=0$, the matrix $Q(:,\cJ(\cC))$ does not have a circuit, and the algorithm examines the next subset $\cJ(\cC)$. If $d=1$, the single possible circuit of this matrix consists of the common support of the vectors in this null space, by Lemma \ref{lmm_char}. The algorithm examines this circuit and stops with a value $\alpha = true$ if the circuit has size at most $n$. Finally, if $d>1$, there may still be a circuit of size $n$ or smaller in $Q(:,\cJ(\cC))$, and it can be found by applying the same algorithm. This matrix has strictly fewer columns than $Q$, and its rank is also strictly smaller than the rank of $Q$. Hence the recursion will terminate after finitely many calls.

\smallskip
Using as before the notation $\rho = \frac{m+1}{N}$, we see that $r \approx n \rho^{-1}$. For the class of instances where  $\frac{m}{N}$ remains bounded away from $0$, the number of matrices that has to be examined to exclude the presence of circuits up to size $n$ is exponential in $n$, but it is independent of the problem size $Nm$, assuming of course that the algorithm does not call itself. Specifically, consider instances where $\frac{m+1}{N} \ge \rho_0$ and $n \le \frac{s\rho_0}{1-\rho_0}$ for some integer $s$ and some $\rho_0$. Then $n \frac{N}{n+s} \le m+1$, and we may take $r=n+s\le (1-\rho_0)^{-1}s$. In this case, the number of matrices that has to be examined is approximately bounded by
\[
\binom{n+s}{n} \le \binom{(1-\rho_0)^{-1}s}{s}  \sim \left( 2 \pi \rho_0 s\right)^{-1/2} A^s, \quad A = (1-\rho_0)^{\rho_0-1}\rho_0^{-\rho_0} > 1
\]
by Stirling's formula. Recall that for this class of instances, the expected number of matrix examinations to find a circuit of length $n$ with the randomized algorithm of the previous section is also bounded independently of $Nm$  and exponential in $n$.

\medskip
As before, it is possible to compute the factorization $A = L_1 (Q^*,\I_m)$ and achieve additional computational savings. We leave the details to the reader.

\section{Inexact Circuits}

We now discuss situations in which the $M \times N$ matrix $A$ has full rank and we wish to find a small subset of columns for which a linear combination vanishes approximately. Let $\epsilon > 0$ and $x$ be an $N$-vector with $\|x\| = 1$, then we say that $\cI \subset \overline{1,N}$ is an \emph{$\epsilon$-near circuit with witness vector $x$} if $supp(x) = \cI, \, \|Ax \| \le \epsilon$, and all singular values of $A(:,\cI_1)$ are larger than $\epsilon$ whenever $\cI_1 \subset \cI, \, \cI_1 \ne \cI$. We are interested in circuits with "few" elements (relative to the number of columns  $N$). Thus if a candidate for an $\epsilon$-near circuit with corresponding witness vector has been proposed, it is an easy matter to verify this. In particular, finding all singular values of $A(:,\cI)$ is a matter of $O(M |\cI|^2)$ operations, and it is possible to find all singular values of all matrices $A(:,\cI_1)$ with $\cI_1 = \cI - \{k\}$ for some $k$ in $O(M|\cI|^3)$ operations or faster, if suitable downdating methods are used.

\medskip
\begin{lemma} \label{lmm_near} Let  $\epsilon > 0$ and let $A$ be  an $M \times N$ matrix with singular values $0 \le \sigma_1 \le  \sigma_2 \le \dots \le \sigma_{max}$ and singular value decomposition $A = U S V^T$. Let $V = (v, V_2)$, where $v$ is the first column of $V$, corresponding to $\sigma_1$.  Let $\cI \subset \overline{1,N}$.

a) If $\cI$ is an $\epsilon$-near circuit of $A$ with witness vector $x$ and $\epsilon < \sigma_2$, then
\[\sum_{j \notin \cI}  v_j^2 \le \frac{\epsilon^2}{\sigma_2^2 - \epsilon^2}.
 \]

b)If
\[\sum_{j \notin \cI}  v_j^2 = \delta^2
\]
then $\cI$ is an $\epsilon'$-near circuit with
\[\epsilon' \le \sqrt{\sigma_1^2(1-\delta^2) + \sigma_{max}^2 \delta^2}.\]

\end{lemma}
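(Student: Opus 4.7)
The plan is to use the singular value decomposition $A = USV^T$ together with two standard ingredients. First, since the columns of $V$ form an orthonormal basis and $V = (v, V_2)$,
\[
\|V^T x\|^2 = \|x\|^2, \qquad \|Ax\|^2 = \sigma_1^2 (v^T x)^2 + \|S_2 V_2^T x\|^2,
\]
where $S_2 = \mathrm{diag}(\sigma_2, \sigma_3, \dots)$; in particular $\|V_2^T x\|^2 = \|x\|^2 - (v^T x)^2$. Second, if $x$ is supported on $\cI$ and $P_\cI$ denotes the diagonal coordinate projection onto $\cI$, then $v^T x = (P_\cI v)^T x$, so $|v^T x| \le \|P_\cI v\|\,\|x\|$ by Cauchy--Schwarz.

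For (a), I would use that the witness satisfies $\|x\| = 1$, $\mathrm{supp}(x) = \cI$, and $\|Ax\| \le \epsilon$. Bounding $\|S_2 V_2^T x\|^2 \ge \sigma_2^2 \|V_2^T x\|^2$ and discarding the nonnegative term $\sigma_1^2 (v^T x)^2$ give
\[
\epsilon^2 \;\ge\; \sigma_2^2 \bigl(1 - (v^T x)^2\bigr),
\]
so $(v^T x)^2 \ge 1 - \epsilon^2/\sigma_2^2$, a legitimate lower bound thanks to $\epsilon < \sigma_2$. Combining with $|v^T x|^2 \le \|P_\cI v\|^2$ and $\|v\| = 1$,
\[
\sum_{j \notin \cI} v_j^2 \;=\; 1 - \|P_\cI v\|^2 \;\le\; 1 - (v^T x)^2 \;\le\; \frac{\epsilon^2}{\sigma_2^2} \;\le\; \frac{\epsilon^2}{\sigma_2^2 - \epsilon^2},
\]
the last step being the slightly weaker form stated in the lemma.

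For (b), my plan is to produce an explicit witness by the natural choice $x := P_\cI v / \|P_\cI v\|$, which is a unit vector supported in $\cI$ (with full support $\cI$ provided no entry $v_j$ for $j \in \cI$ vanishes). Then $v^T x = \|P_\cI v\|^2/\|P_\cI v\| = \sqrt{1 - \delta^2}$, whence $\|V_2^T x\|^2 = 1 - (1 - \delta^2) = \delta^2$ and $\|S_2 V_2^T x\|^2 \le \sigma_{\max}^2 \delta^2$. Consequently
\[
\|Ax\|^2 \;=\; \sigma_1^2 (1 - \delta^2) + \|S_2 V_2^T x\|^2 \;\le\; \sigma_1^2 (1 - \delta^2) + \sigma_{\max}^2 \delta^2,
\]
which is the claimed bound on $\epsilon'$.

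The main obstacle I anticipate is reconciling the construction in (b) with the full definition of an $\epsilon$-near circuit, which also demands the minimality condition that all singular values of $A(:,\cI_1)$ exceed $\epsilon'$ for every proper subset $\cI_1 \subsetneq \cI$, as well as support equality $\mathrm{supp}(x) = \cI$. The norm inequality $\|Ax\| \le \epsilon'$ is the analytic core and is what the SVD computation cleanly delivers; the minimality property is not produced by this argument and would have to be addressed separately (or the conclusion mildly reinterpreted). Part (a), by contrast, invokes only the existence of a witness and the norm bound, so the argument above applies without further work.
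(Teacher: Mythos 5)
Your proof is correct and takes essentially the same approach as the paper's: part (a) splits $x$ along $v$ and its orthogonal complement (your Cauchy--Schwarz bookkeeping even yields the slightly sharper bound $\epsilon^2/\sigma_2^2$, which implies the stated one), and part (b) constructs the identical witness $P_{\cI}v/\|P_{\cI}v\|$ and bounds $\|Ax\|^2$ by $\sigma_1^2(1-\delta^2)+\sigma_{max}^2\delta^2$. The caveat you raise about the support and minimality clauses in (b) applies equally to the paper's own proof, which likewise verifies only the norm bound, so it is not a gap specific to your argument.
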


\begin{proof}To prove part a),set $x_1 = vv ^Tx$ and $x_2 = x-x_1$. Then clearly $V_2^Tx_1 = 0$ and $1 = \|x_1\|^2 + \|x_2\|^2$. Also,
\[\epsilon^2 \ge \|Ax\|^2 = \|Ax_1\|^2 + \|Ax_2\|^2 \ge \|Ax_2\|^2 \ge \sigma_2^2\|x_2\|^2  \, .
\]
Consequently
\[\frac{\epsilon^2}{\sigma_2^2}\ge \|x_2\|^2 \ge \sum_{j \notin \cI} x_{2,j}^2 = \sum_{j \notin \cI} x_{1,j}^2 \quad \text{and} \quad \|x_1\|^2 \ge 1 - \frac{\epsilon^2}{\sigma_2^2} \, .\]
Therefore $v =  \|x_1\|^{-1}x_1$ has the desired properties.

 \smallskip
 To prove part b), let $z_j = v_j$ if $j \in \cI$ and $z_j = 0$ otherwise, and set $x = \|z\|^{-1}z = (1 - \delta^2)^{-1}z$. The cosine of the angle enclosed by $v$ and $z$ is $\sqrt{1-\delta^2}$, and hence $x = \sqrt{1-\delta^2}v + \delta y$ where $\|y\| = 1$ and $(Ay)^TAv = 0$; therefore $\|Ay\| \le \sigma_{max}$. Hence
 \[\|Ax\|^2 = (1-\delta^2)\|Av\|^2 + \delta^2\|Ay\|^2 \le \sigma_1^2(1-\delta^2) + \sigma_{max}^2 \delta^2 \, .
 \]
\end{proof}

\smallskip
This observation suggests that one should look for $\epsilon$-near circuits by selecting subsets $\cK \subset \overline{1,N}$ such that $A(:,\cK)$ has only one singular value  $\sigma_1 < \epsilon$. A near circuit with a witness vector may then be discovered by setting all small components of the corresponding singular vector $v$ equal to zero.This is the idea of the following algorithm. It either produces a candidate set $\cI$ for an $\epsilon$-near circuit, or it returns the answer that no such near circuit exists.

\smallskip
\begin{algorithm} \label{near_rand_alg}Let $A$ be as above, with $M \le N$. Let the desired circuit size $n$, $\delta  \in (0,1)$,  and $\epsilon > 0$ be given.

0. Set $p = 1$. Determine $m$ such that $\epsilon$ separates the $m$ largest singular values of $A$ from the $M-m$ smallest singular values.

While $p > \delta$,

1. Choose a random subset $\cK \subset \overline{1,N}$ with $|\cK| = m+1$,
uniformly from all these subsets, independently from previous selections.

2. Find the singular value decomposition $USV^T = A(:,\cK)$, with singular values $\sigma_1 \le \sigma_2 \le \dots$.

3.a) If $\sigma_1 \le \epsilon \le \sigma_2$, consider $v$, the first column of $V$. Set
$\cJ$ equal to the support of the $n$ largest entries of $v$. Compute the smallest singular value of
$A(:,\cK(\cJ))$. If this value is less than $\epsilon$, STOP and return $\cI = \cK(\cJ)$.
Otherwise replace $p$ with
\[p \left(1 - \frac{\binom{N-n}{|\cK|-n}}{\binom{N}{|\cK|}}\right)\,,\]
and go to step 1.

3.b) If $\sigma_1 \le \sigma_2 \le \dots \le \sigma_l \le \epsilon < \sigma_{l+1}$, replace $\cK$ with a random subset $\tilde \cK \subset \cK$ with $l-1$ fewer elements, selected uniformly from all such subsets of $\cK$, and go to step 2.
\end{algorithm}

\smallskip
Since the matrix $A(:,\cK)$ has $m+1$ columns, at least one of its singular values is less or equal than $\epsilon$, by well-known interlacing properties of singular values (\cite{golub}). Hence either case 3.a) or case 3.b) always occurs. Should there be more than one singular value that is less or equal than $\epsilon$, then $\cK$ is replaced with a smaller $\cK$, and the smaller matrix $A(:,\cK)$ still has at least one singular value less or equal than $\epsilon$. Therefore, the method eventually ends up in step 3.a. By Lemma \ref{lmm_near}, the singular vector corresponding to this singular value should suggest a near circuit. The method therefore inspects this singular vector to determine a candidate for a circuit of the desired size.

\smallskip
As before, if no near circuit of size up to $n$ is found before $p < \delta$, we may be confident with confidence $1 - \delta$ that no near circuit of the desired size exists, that is, the search has been performed sufficiently often such that a near circuit with the desired properties would have been found with probability at least $1 - \delta$, if it existed. The number of trials until a near circuit is found has the same distribution as the corresponding quantity in algorithm \ref{rand_alg}. If a candidate $\cI$ has been found, it should still be tested; that is, the smallest singular value $\sigma(\cI)$ of $A(:,\cI)$ should be found as well as the minimal eigenvalues $\sigma(\cI')$ of all matrices $A(:,\cI')$ with $\cI'= \cI - \{k\}$. If $\sigma(\cI) \le \epsilon < \sigma(\cI')$ for all $i$, an $\epsilon$-near circuit has been found.   

\section{Practical Considerations and Numerical Examples}

When looking for circuits, one should first identify and remove those columns that cannot belong to any circuits. This is easily done using Lemma \ref{lmm_nocircuit} and reduces $N$ and $m=rank(A)$ by the same fixed amount. Next, one should use a version of the random search algorithm \ref{rand_alg}, starting with small circuit sizes. A repeated random search will reveal whether there is more than one circuit present. Starting with algorithm \ref{syst_alg} to find circuits is also possible but not recommended, since it may lead to very long run times for reasons that will be explained below.

\smallskip
The main computational step in algorithm \ref{rand_alg} is the determination of $Z$, a matrix whose columns span the null space of $A$, in step 2. This has to be done repeatedly until $rank(Z) = 1$. One would expects that $rank(Z) = 1$ already when the first random subset $\cK$ of size $m+1$ is selected in step 1 of that algorithm. However, if the rank $m$ of $A$ is small relative to the number of columns $N$ (e.g. $\rho = \frac{m}{N} \approx 0.3$ or smaller), the null space dimension $rank(Z)$ is typically larger than 1 for the first selection of $\cK$, and $Z$ must be computed again for smaller subsets of columns. This is just the computation of a null space of a downdated submatrix, and can be thus be done rapidly, see \cite{golub}.

\smallskip
A numerical experiment was carried out to test if the probability of detecting a circuit in a single trial depends only on the ratio of the matrix rank $m$ to the number of columns $N$ and the size of the circuits that are present. Random matrices with $N=100$ columns and $m = \rho N$ rows, with $\rho = \frac{m}{N} \in \{0.3, 0.5, 0.7, 0.9\}$ were generated that had circuits with sizes given by vectors of integers $C = (c_1,\dots, c_k)$. This was done by first generating an $m \times (N-k)$ matrix $A'$ with independent entries drawn from a standard normal distribution, choosing $k$ random sets of columns of size $c_1-1,\dots, c_k-1$ of $A'$ and forming $k$ random linear combinations from them, and appending these $k$ vectors to the matrix $A'$ to form $A$. The probability of detecting a circuit in a single trial was estimated from 1000 trials applied to the same fixed matrix $A$. The results are given in the table below. The last column of the table contains the approximate probability computed from (\ref{eq_prob_2}).

\bigskip
\begin{quote}
\begin{tabular}{|r|r|r|r|r|r|r|}
  \hline
   $\rho$&$C$& $N=100$ & $N=200$ & $N=400$ & $N=800$ & \text{Theory} \\
  \hline
  .9&(6) & .55 & .56 & .52 &  .55 & .53 \\
  .7& (5,5,5) & .42 & .44 & .43 & .42 & .42 \\
  .5& (4,4) & .15 & .11 & .11 & .13 & .12 \\
  .3& (3,3,3,3,3) & .13 & .14 & .14 & .10 & .13 \\
  \hline
\end{tabular}
\end{quote}

\bigskip
The table shows nearly constant detection probabilities, independent of the number of columns $N$, close to the approximate values in the last column. While the actual detection probability for a single trial of course does depend on the matrix, the experiment confirms the results of the discussion in section 3: The detection probability for a single trial depends essentially only on $\rho = \frac{m}{N}$ and on the number and sizes of circuits that are present.

\smallskip
The systematic algorithm \ref{syst_alg} examines a fixed set of $\binom{r}{n}$ submatrices of $A$ for the presence of circuits. Unless something is known about the likely location of a circuit, the subsets $\cJ_1,\dots, \cJ_r$ in step 0  should be generated randomly.  The number of submatrices to be examined is easy to determine at the start, but it may increase during the execution, since the algorithm is recursive. Thus the computational effort may increase substantially beyond the initial estimate. The algorithm can also be used to search for a circuit.

\smallskip
In an experiment, a $m \times N$ random matrix with $N=100$ columns and $m = \rho N$ orthonormal rows was prepared that contained a single circuit of size $c=5$. Algorithms \ref{rand_alg} and \ref{syst_alg} were used to find this circuit. The ordering of the columns was permuted randomly between attempts. The mean number of times that a nullspace had to be computed until the circuit was detected was estimated from 100 attempts for both algorithms. Also recorded is the expected number of trials from formula \ref{eq_prob_1}. The results are given in the table below.

\bigskip
\begin{quote}
\begin{tabular}{|r|r|r|r|r|}
  \hline
   & $\rho=.9$ & $\rho=.7$ & $\rho=.5$ & $\rho=.3$ \\
   \hline
  Algorithm \ref{rand_alg} & 1.61 & 5.48 & 28.7 & 512 \\
  Algorithm \ref{syst_alg} & 2.18 & 14.9 & 66.8 & 2270 \\
  Expected trials & 1.69 & 5.95 & 32 & 412 \\
  \hline
\end{tabular}
\end{quote}

\bigskip
The table shows that the observed average number of nullspace evaluations tracks the expected number of trials closely if random search is used (algorithm \ref{rand_alg}) and also for the case of systematic search (algorithm \ref{syst_alg}) for $\rho$ close to 1. For smaller $\rho$, the systematic search algorithm tends to require substantially more nullspace evaluations than the random method, no doubt because of the recursion.

\smallskip
The systematic search algorithm \ref{syst_alg} could also be used to find all circuits of a given size. It turns out that any given circuit will typically be detected many times, if this is attempted, leading to very long execution times. This occurs independently of self-calls of this algorithm. Hence one should only turn to algorithm \ref{syst_alg} if the absence of such circuits is suspected with high confidence, after repeated random searches have turned up nothing.

\smallskip
Unlike circuits of a given size, near circuits as defined in section 5 are not unique. Hence algorithm \ref{near_rand_alg} does not behave as predictably as algorithm \ref{rand_alg}. In particular, if $\epsilon$ is chosen too large in this algorithm, then there may be many $\epsilon$-near circuits which can be detected, while there are none if $\epsilon$ is too small. Using a bisection approach, it is possible to find $\epsilon$-near circuits with near minimal $\epsilon$ fairly reliably.

\smallskip
Given a matrix $A$ (generated at random), it is observed that the smallest singular values of randomly selected submatrices with a fixed number of columns are very nearly normally distributed $\sim N(\mu,\sigma)$ with $\mu$ and $\sigma$ depending on the matrix. Hence randomly selected submatrices will very rarely have smallest singular value less than $\mu - 4 \sigma$ or so. In this situation, Algorithm \ref{near_rand_alg} is capable of detecting submatrices for which the smallest singular value is less than $\mu - 8 \sigma$, with high reliability.

\section{Other Problems}

There are a number of other problems which can be addressed with modifications of the methods in this note. We only discuss problems that may be posed for general matrices; that is, problems related to circuits in specific matrices coming from graph theory, electrical engineering, or statistical applications will not be discussed.

\smallskip
A simple variation on the task of finding one circuit of a given size is to find all circuits up to a given size. Clearly this may be attacked by repeated application of algorithm \ref{rand_alg}. Since circuits with fewer columns are more likely to be detected with this method, it may be necessary to delete a column in a circuit that has already been found from the matrix in order to find specifically longer circuits that do not contain this column.

\smallskip
A common problem is to find circuits or near circuits that have prescribed intersection properties with a given collection of sets of columns. This occurs e.g. in statistics, where a portion of the variables may be thought of as predictors and another one as responses. Circuits that contain one response and a small number predictors are of special interest in problems of model selection. Algorithms \ref{rand_alg} and \ref{syst_alg} are easily modified to handle such situations.

\smallskip
One may be interested in sampling randomly from the set of all $\epsilon$-near circuits up to a given size, for a given $\epsilon$. Algorithm \ref{syst_alg} effectively provides such a random sampling scheme. However, it is unclear what the sampling distribution is in this case, and it appears to be difficult and laborious to estimate its properties.

\smallskip
We finally mention the following update problem: Suppose $A = \left(\begin{matrix} A_1 \\ A_2
\end{matrix} \right)$ and a set of circuits of $A_1$ has been identified. Is it possible to exploit this information to speed up the detection of circuits of the full matrix $A$? An extreme version of this task, related to subspace tracking, occurs if rows are added to $A$ one at a time and a set of circuits has to be maintained or modified. Progress on such incremental algorithms has been made in \cite{neylon}.

\end{document}